\newtheorem{theorem}{Theorem}[section]
\newtheorem{proposition}[theorem]{Proposition}
\newtheorem{lemma}[theorem]{Lemma}
\numberwithin{equation}{section}
\begin{document}

\baselineskip=15.5pt

\title[Holomorphic bundles trivializable by proper surjective holomorphic map]{Holomorphic 
bundles trivializable by proper surjective holomorphic map}

\author[I. Biswas]{Indranil Biswas}

\address{School of Mathematics, Tata Institute of Fundamental
Research, Homi Bhabha Road, Mumbai 400005, India}

\email{indranil@math.tifr.res.in}

\author[S. Dumitrescu]{Sorin Dumitrescu}

\address{Universit\'e C\^ote d'Azur, CNRS, LJAD, France}

\email{dumitres@unice.fr}

\subjclass[2010]{53C07, 32L10, 70G45}

\keywords{Finite bundle, flat holomorphic connection, finite monodromy, reductive group.}

\date{}

\begin{abstract}
Given a compact complex manifold $M$, we investigate the holomorphic vector bundles $E$ on $M$
such that $\varphi^* E$ is holomorphically trivial for some surjective holomorphic map
$\varphi$, to $M$, from some compact complex manifold. We prove that these are exactly
those holomorphic vector bundles that admit a flat holomorphic connection with finite 
monodromy homomorphism. A similar result is proved for holomorphic principal
$G$--bundles, where $G$ is a connected reductive complex affine algebraic group.

\end{abstract}

\maketitle

\section{Introduction}

Let $M$ be a compact connected complex manifold. Let $E$ be a holomorphic vector bundle on $M$ with
the following property: there is a compact connected complex manifold $X$, and a surjective holomorphic
map $\varphi\, :\, X\, \longrightarrow\, M$, such that $\varphi^*E$ is holomorphically trivial.
To clarify, the dimension of $X$ is allowed to be larger than that of $M$.
Note that if the assumption that $X$ is compact is removed, then every holomorphic vector
bundle satisfies this condition. Indeed, the pullback of $E$ to the total space of the frame
bundle for $E$ has a canonical holomorphic trivialization. However, this total space is
never compact, if $\text{rank}(E)\, >\, 0$.

Let $E$ be a holomorphic vector bundle on $M$ admitting a flat holomorphic connection $D$ whose monodromy
homomorphism
$$
\rho\, :\, \pi_1(M, \, x_0)\, \longrightarrow\, \text{GL}(E_{x_0})\, ,
$$
where $x_0\, \in\, M$ is a base point, has finite image. Consider the finite \'etale
Galois covering $f\, :\, \widetilde{M}\, \longrightarrow\, M$ corresponding to
the finite index subgroup $\text{kernel}(\rho)\, \subset\, \pi_1(M,\, x_0)$. It is easy to see
that $f^*E$ is holomorphically trivial. Therefore, $E$ satisfies the condition stated at
the beginning.

Our aim here is to prove a converse of it. More precisely, we prove the following
(see Theorem \ref{thm1}):

\begin{theorem}\label{thm0}
Let $E$ be a holomorphic vector bundle on a compact connected complex manifold $M$
satisfying the condition that there is a compact connected complex manifold $X$, and a surjective
map $\varphi\, :\, X\, \longrightarrow\, M$, such that $\varphi^*E$ is
holomorphically trivial. Then $E$ admits a flat holomorphic connection whose
monodromy homomorphism has finite image.
\end{theorem}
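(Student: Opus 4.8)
The plan is to reduce to a \emph{finite} Galois covering, extract from the trivialization a finite-dimensional linear representation of its Galois group, and deduce that $E$ is a finite holomorphic vector bundle; the conclusion then follows from the equivalence between finiteness and the existence of a flat holomorphic connection with finite monodromy.

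I would begin by replacing $X$ with the Stein factorization of $\varphi$. Write $\varphi\,=\,\varphi_2\circ\varphi_1$ with $\varphi_1\colon X\,\longrightarrow\, M'$ proper, having connected fibers and $(\varphi_1)_*\mathcal O_X\,=\,\mathcal O_{M'}$, and $\varphi_2\colon M'\,\longrightarrow\, M$ finite and surjective, where $M'$ is a normal compact connected complex space. The projection formula gives $(\varphi_1)_*\varphi_1^*(\varphi_2^*E)\,=\,\varphi_2^*E\otimes(\varphi_1)_*\mathcal O_X\,=\,\varphi_2^*E$, whereas this same sheaf equals $(\varphi_1)_*(\mathcal O_X^{\oplus r})\,=\,\mathcal O_{M'}^{\oplus r}$; hence $\varphi_2^*E$ is already holomorphically trivial on $M'$. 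Passing to (the normalization of) a Galois closure of $\varphi_2$, I may henceforth assume that $\varphi\colon X\,\longrightarrow\, M$ is a finite surjective Galois covering, with group $\Gamma$, with $X$ normal, compact and connected, and with $\varphi^*E\,\cong\,\mathcal O_X^{\oplus r}$.

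Now the canonical $\Gamma$-equivariant structure on $\varphi^*E$ (arising from $\varphi\circ\gamma\,=\,\varphi$ for $\gamma\in\Gamma$), transported through a fixed trivialization, attaches to each $\gamma$ an automorphism of $\mathcal O_X^{\oplus r}$ over $X$; since $X$ is compact and connected, $H^0(X,\mathcal O_X)\,=\,\mathbb C$, so each such automorphism is a constant invertible matrix, and the cocycle identity makes $\gamma\,\longmapsto\,\rho(\gamma)$ a homomorphism $\rho\colon\Gamma\,\longrightarrow\,\mathrm{GL}(r,\mathbb C)$. Since $M\,=\,X/\Gamma$, we have $\mathcal O_M\,=\,(\varphi_*\mathcal O_X)^\Gamma$, and Galois descent for coherent sheaves identifies, for every $n\,\geq\,0$,
$$\mathrm{End}_M\big(E^{\otimes n}\big)\;=\;\mathrm{End}_X\big(\varphi^*E^{\otimes n}\big)^\Gamma\;=\;\mathrm{End}_X\big(\mathcal O_X^{\oplus r^n}\big)^\Gamma\;=\;\mathrm{End}_\Gamma\big(V^{\otimes n}\big),$$
where $V\,=\,\mathbb C^r$ carries $\rho$ and $\Gamma$ acts on $V^{\otimes n}$ via $\rho^{\otimes n}$. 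This algebra is finite-dimensional and semisimple, so the indecomposable direct summands of $E^{\otimes n}$ correspond to the irreducible $\Gamma$-subrepresentations of $V^{\otimes n}$; as $n$ runs over all nonnegative integers, only finitely many isomorphism classes arise, because $\Gamma$ has only finitely many irreducible representations. (The implicit claim that a direct summand of $\mathcal O_X^{\oplus N}$ on the compact connected $X$ is again trivial holds because the associated idempotent of $\mathrm{End}_X(\mathcal O_X^{\oplus N})\,=\,\mathrm{Mat}_N(\mathbb C)$ is a constant matrix.) Hence $E$ is finite.

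It then suffices to invoke the characterization, for holomorphic vector bundles on a compact complex manifold, of finiteness as the existence of a flat holomorphic connection with finite monodromy (Theorem \ref{thm1}); this completes the proof. I expect this last step to be the main difficulty. The reduction makes $\varphi$ finite but in general \emph{ramified}, and on the complement of the branch divisor the evident flat structure on $E$ has monodromy $\rho\circ\mu$, where $\mu$ sends a small loop around the branch divisor to a generator of the corresponding inertia subgroup of $\Gamma$; this inertia need not lie in $\ker\rho$, so that connection does not extend across the ramification, and one cannot produce the desired connection by so naive a route. Going through finiteness is what circumvents this: finiteness is insensitive to the ramification of $\varphi$, and---via the Tannakian/Nori formalism underlying Theorem \ref{thm1}---it forces $E$ itself, though not that particular connection, to be trivialized by a genuine finite \emph{\'etale} covering of $M$, whence the flat holomorphic connection with finite monodromy.
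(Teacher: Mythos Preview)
Your argument is correct and reaches the same endpoint as the paper---showing $E$ is a finite bundle in Nori's sense and then invoking the result of \cite{Bi}---but the route to finiteness is genuinely different. The paper does \emph{not} pass to a Galois closure. Instead, after Stein factorization it constructs a trace splitting $\mathcal O_M\hookrightarrow\varphi_*\mathcal O_X$, so that $V$ (and every $V^{\otimes k}$) becomes a direct summand of $(\varphi_*\mathcal O_X)^{\oplus r}$; Atiyah's Krull--Schmidt theorem for torsionfree coherent sheaves then forces all the $V^{\otimes k}$ to be built from the finitely many indecomposable locally free summands of $\varphi_*\mathcal O_X$. Your approach replaces this sheaf-theoretic bookkeeping by the representation theory of a finite group: after the Galois closure you identify $\mathrm{End}_M(E^{\otimes n})\cong\mathrm{End}_\Gamma(V^{\otimes n})$, and finiteness follows because $\Gamma$ has only finitely many irreducibles. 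This is more conceptual and makes the link to monodromy visible earlier; the paper's argument, on the other hand, avoids constructing the Galois closure in the analytic category (which needs Grauert--Remmert type extension across the branch locus) and does not require verifying $(\varphi_*\mathcal O_X)^\Gamma=\mathcal O_M$ over the ramification.

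Two small remarks. First, to conclude finiteness you need that the indecomposable summand of $E^{\otimes n}$ attached to an irreducible $W_i$ is the \emph{same} bundle regardless of which $n$ it appears in; this follows immediately once you extend your identification to $\mathrm{Hom}_M(E^{\otimes n},E^{\otimes m})\cong\mathrm{Hom}_\Gamma(V^{\otimes n},V^{\otimes m})$ compatibly with composition, but it is worth saying. Second, your closing citation ``Theorem~\ref{thm1}'' points to the very statement you are proving; the external input you mean is Theorem~1.1 of \cite{Bi}.
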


It is easy to see that a holomorphic vector bundle $E$ on $M$ admits a flat
holomorphic connection with finite monodromy if and only if 
there is a finite \'etale Galois covering $f\, :\, \widetilde{M}\, \longrightarrow\, M$
such that $f^*E$ is holomorphically trivial (the ``only if'' part was explained above).
In \cite{No1}, Nori characterized such vector bundles in the frame-work
of algebraic geometry. More precisely, when the base field is the field of 
complex numbers, his result gives the following statement: For an algebraic vector
bundle $V$ on a complex projective variety $Y$,
the following two conditions are equivalent:
\begin{enumerate}
\item There is a finite \'etale Galois covering $f\, :\, \widetilde{Y}\, \longrightarrow\, Y$
such that $f^*V$ is algebraically trivial.

\item There are finitely many algebraic vector bundles $W_1,\, \cdots ,\, W_\ell$ on $Y$
such that
$$
V^{\otimes k}\,=\, \bigoplus_{j=1}^\ell W^{\oplus c_{k,j}}_j
$$
for every $k\, \geq\, 1$, where $c_{k,j}$ are nonnegative integers.
\end{enumerate}

Any vector bundle satisfying the second condition is called a finite bundle
\cite[p.~35, Definition]{No1}. This definition of finiteness clearly makes sense for holomorphic
vector bundles on compact complex manifolds. A holomorphic vector bundle on a compact connected 
complex manifold is finite if and only if it admits a flat holomorphic connection with finite
monodromy \cite{Bi}.

Theorem \ref{thm0} is also extended to holomorphic principal $G$--bundles over $M$,
where $G$ is a connected reductive complex affine algebraic group (see Lemma \ref{lem1}).

An application of Lemma \ref{lem1} is given in the context of holomorphic generalized
Cartan geometries in the sense of \cite{AM,BD} (see Proposition \ref{thm2}). In the
algebraic context, a positive characteristic version of Theorem \ref{thm0} was proved
in \cite[p.~226, Theorem 1.1]{BdS}.

\section{Preliminaries}\label{se2}

Let $M$ be a connected complex manifold. The holomorphic tangent and cotangent bundles
on $M$ will be denoted by $TM$ and $\Omega^1_M$ respectively. The exterior product
$\bigwedge^i \Omega^1_M$ will be denoted by $\Omega^i_M$. 

Let $G$ be a connected complex
Lie group. The Lie algebra of $G$ will be denoted by $\mathfrak g$. Let
\begin{equation}\label{f1}
p\, :\, E\,\longrightarrow\, M
\end{equation}
be a holomorphic principal $G$--bundle on $M$. Therefore,
 $E$ is equipped with a holomorphic action of $G$ on the right which is both free
and transitive on the fibers of $p$. Consider
the holomorphic right action of $G$ on the holomorphic tangent bundle
$TE$ induced by the action of $G$ on $E$. The quotient
\begin{equation}\label{ab}
\text{At}(E)\,:=\, (TE)/G
\end{equation}
is a holomorphic vector bundle over $E/G\,=\, M$; it is called the
\textit{Atiyah bundle} for $E$. The differential $$dp\, :\,
TE\, \longrightarrow\, p^* TM$$ of the projection $p$ in \eqref{f1} is $G$--equivariant
for the trivial action of $G$ on the fibers of $p^*TM$. The action of $G$ on $E$ produces
a holomorphic homomorphism from the trivial holomorphic bundle
$$
E\times {\mathfrak g}\, \longrightarrow\, \text{kernel}(dp)
$$
which is an isomorphism. Therefore, we have a short
exact sequence of holomorphic vector bundles on $E$
\begin{equation}\label{f2}
0\,\longrightarrow\, \text{kernel}(dp)\,=\,E\times{\mathfrak g}\,\longrightarrow
\,TE \, \stackrel{dp}{\longrightarrow}\, p^*TM\, \longrightarrow\, 0
\end{equation}
in which all the homomorphisms are $G$--equivariant. The quotient $\text{kernel}(dp)/G$ is the
adjoint vector bundle $\text{ad}(E)\,=\, E({\mathfrak g})$, which is the holomorphic
vector bundle over $M$ associated to $E$ for the adjoint action of $G$ on the
Lie algebra $\mathfrak g$. Taking
quotient of the bundles in \eqref{f2}, by the actions of $G$, the following short exact sequence of
holomorphic vector bundles on $M$ is obtained:
\begin{equation}\label{f3}
0\, \longrightarrow\, \text{ad}(E)\,\longrightarrow\, \text{At}(E)\,
\stackrel{d'p}{\longrightarrow}\, TM \,\longrightarrow\, 0\, ,
\end{equation}
where $d'p$ is the descent of the homomorphism $dp$ (see \cite{At}); this exact sequence
is known as the \textit{Atiyah exact sequence} for $E$.

A holomorphic connection on $E$ is a holomorphic homomorphism of vector bundles
$$
D\, :\, TM\, \longrightarrow\, \text{At}(E)
$$
such that
$$
(d'p)\circ D\,=\, \text{Id}_{TM}\, ,
$$
where $d'p$ is the projection in \eqref{f3} (see \cite{At}). 

We note that giving a holomorphic connection on $E$ is equivalent to giving a $\mathfrak g$--valued
holomorphic $1$--form
\begin{equation}\label{om}
\omega\,\in\, H^0(E,\, \Omega^1_E\otimes_{\mathbb C}{\mathfrak g})
\end{equation}
on $E$ such that
\begin{itemize}
\item the homomorphism $\omega\, :\, TE\, \longrightarrow\, \mathfrak g$ is $G$--equivariant
for the adjoint action of $G$ on ${\mathfrak g}$, and

\item the restriction of $\omega$ to any fiber of $p$ is the Maurer--Cartan form.
\end{itemize}
The connection homomorphism $D\, :\, TM\, \longrightarrow\, \text{At}(E)$ for the
connection defined by $\omega$ is uniquely
determined by the condition that the image of $D$ coincides with the kernel of $\omega$.

The curvature of a holomorphic connection $D$ is
$$
{\mathcal K}(D)\, :=\, D\circ D\, \in\, H^0(M,\, \text{ad}(E)\otimes\Omega^2_M)\, .
$$
The connection $D$ is called \textit{flat} (or \textit{integrable}) if ${\mathcal K}(D)
\, =\, 0$. This is equivalent to the Frobenius integrability condition for the
distribution on $TE$ defined by the kernel of $\omega$ \cite[p.~78, Corollary 5.3]{KN}, \cite{Eh}.

Fix a base point $x_0\, \in\, M$ and also fix a point $z\, \in\, p^{-1}(x_0)\, \subset\, E$.
Given a flat holomorphic connection $D$ on $E$, by taking parallel translations of $z$, with respect to $D$,
along loops based at $x_0$ we obtain the monodromy homomorphism
$$
\rho(D,z)\, :\, \pi_1(M,\, x_0)\, \longrightarrow\, G\, .
$$
If we replace $z$ by $zg$, where $g\, \in\, G$, then $\rho(D,zg)(\gamma)\,=\,
g^{-1}\rho(D,z)(\gamma)g$ for all $\gamma\,\in\, \pi_1(M,\, x_0)$. If the
image of $\rho(D,z)$ is a finite group, the flat connection $D$ is said to be
having \textit{finite monodromy}.

Let $E$ be a holomorphic principal $G$--bundle over $M$ and $D$ a flat holomorphic connection on $E$
such that the corresponding monodromy homomorphism
$$
\rho(D,z)\, :\, \pi_1(M,\, x_0)\, \longrightarrow\, G
$$
has finite image. Then the subgroup $\text{kernel}(\rho(D,z))\, \subset\, \pi_1(M,\, x_0)$
defines a finite \'etale Galois connected covering
$$f \, :\, \widetilde{M}\, \longrightarrow\, M\, , 
$$
with Galois group $\text{Gal}(f)\,=\,\text{image}(\rho(D,z))$, such that
$$
f^*E\,=\, \widetilde{M}\times E_{x_0}\,=\, \widetilde{M}\times G \,;
$$
here $G$ is identified with the fiber $E_{x_0}$ of $E$ using the map $g\, \longrightarrow\, zg$,
$g\, \in\, G$. In other words, the holomorphic principal $G$--bundle $f^*E$ is
holomorphically trivial.

\section{pullback under a surjective map with connected fibers}\label{sec3.1}

Let $X$ and $M$ be compact connected complex manifolds and
\begin{equation}\label{e0}
\varphi\, :\, X\, \longrightarrow\, M
\end{equation}
a surjective holomorphic map. Let $G$ be a connected complex Lie subgroup of
$\text{GL}(N,{\mathbb C})$ for some $N\, \geq \, 1$.

\begin{proposition}\label{prop1}
Assume that for every point $x\, \in\, M$, the fiber $\varphi^{-1}(x)\, \subset\, X$
is connected. Let $E$ be a holomorphic principal $G$--bundle over $M$ such that $\varphi^* E$
is holomorphically trivial. Then $E$ is also holomorphically trivial.
\end{proposition}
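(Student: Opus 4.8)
The plan is to produce a holomorphic section of $E$: since $(m,g)\, \longmapsto\, \sigma(m)g$ identifies $M\times G$ with $E$ for any holomorphic section $\sigma$ of $E$, this suffices. A holomorphic trivialization of $\varphi^*E$ amounts to a holomorphic section of $\varphi^*E\, \longrightarrow\, X$, which is the same as a holomorphic map $s\, :\, X\, \longrightarrow\, E$ satisfying $p\circ s\,=\, \varphi$, where $p$ is the projection in \eqref{f1}; fix such a map $s$.

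The first step is to show that $s$ is constant on each fiber of $\varphi$. Fix $x\, \in\, M$; then $s$ carries the compact connected set $\varphi^{-1}(x)$ into the fiber $p^{-1}(x)\, \subset\, E$. A holomorphic trivialization of $E$ over an open neighbourhood of $x$ identifies $p^{-1}(x)$ with $G$, and the inclusions $G\, \subset\, \text{GL}(N,{\mathbb C})\, \subset\, {\mathbb C}^{N^2}$ then present $s\vert_{\varphi^{-1}(x)}$ as a holomorphic map from the compact connected complex space $\varphi^{-1}(x)$ to ${\mathbb C}^{N^2}$. Such a map is constant, by the maximum principle. Hence $s$ is constant along every fiber of $\varphi$.

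The second step is to descend $s$ to a holomorphic map $\sigma\, :\, M\, \longrightarrow\, E$ with $\sigma\circ\varphi\,=\, s$. Since $X$ and $M$ are compact, the holomorphic map $\varphi$ is proper and surjective, and its fibers are connected by hypothesis; the finite part of the Stein factorization of $\varphi$ is then bijective onto $M$, and since $M$, being a complex manifold, is a normal complex space, this finite bijective map is a biholomorphism. Therefore $\varphi_*{\mathcal O}_X\,=\, {\mathcal O}_M$, so every holomorphic function on $X$ is the pullback along $\varphi$ of a unique holomorphic function on $M$. Working over the members of an open cover of $M$ over which $E$ is trivialized, and using that the components of $s$ are constant on the fibers of $\varphi$ by the first step, one obtains holomorphic maps into $G$ that assemble into the desired $\sigma$ (the pieces agree on overlaps because $\varphi$ is surjective). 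Finally $(p\circ\sigma)\circ\varphi\,=\, p\circ s\,=\, \varphi$, so $p\circ\sigma\,=\, \text{Id}_M$ by surjectivity of $\varphi$, and $\sigma$ is the sought holomorphic section of $E$.

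The step I expect to be the main obstacle is the descent in the second step: one needs that a holomorphic map on $X$ which is constant on the fibers of $\varphi$ descends to a \emph{holomorphic} — not merely continuous — map on $M$. This is precisely where compactness of $X$ (making $\varphi$ proper), connectedness of the fibers, and normality of $M$ combine, through $\varphi_*{\mathcal O}_X\,=\,{\mathcal O}_M$; the assumption that $G\, \subset\, \text{GL}(N,{\mathbb C})$ enters only in the first step, to embed the fibers of $E$ in an affine space so that the maximum principle applies.
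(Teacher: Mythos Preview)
Your proof is correct and follows essentially the same approach as the paper: take a holomorphic section of $\varphi^*E$, show it is constant on each fiber of $\varphi$ using the embedding $G\subset \text{GL}(N,\mathbb{C})$ and compactness/connectedness of the fibers, and then descend to a holomorphic section of $E$. You are in fact more careful than the paper about the descent step --- the paper simply asserts that the section descends holomorphically, whereas you justify this via the Stein factorization and $\varphi_*\mathcal{O}_X=\mathcal{O}_M$.
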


\begin{proof}
Let
\begin{equation}\label{e2}
\sigma\, :\, X\, \longrightarrow\, \varphi^*E
\end{equation}
be a holomorphic section of the principal $G$--bundle $\varphi^*E$ giving a
holomorphic trivialization of it. For notational convenience, for any point $y\, \in\,
M$, the fiber $\varphi^{-1}(y)$ will be denoted by $X_y$. Consider the restriction
$(\varphi^*E)\vert_{X_y}$ of the holomorphic principal $G$--bundle $\varphi^*E$
to $X_y\, \subset\, X$. Note that $(\varphi^*E)\vert_{X_y}$ is identified
with the trivial principal $G$--bundle $X_y\times E_y$, where $E_y$ is
the fiber of $E$ over the point $y\, \in\, M$. Using this
identification between $(\varphi^*E)\vert_{X_y}$ and $X_y\times E_y$, the restriction
$\sigma\vert_{X_y}$ of the section in \eqref{e2} to $X_y$ corresponds to a
holomorphic map
\begin{equation}\label{e3}
\widehat{\sigma}_y\, :\, X_y\, \longrightarrow\, E_y\, .
\end{equation}

We note that $E_y$ is holomorphically isomorphic to $G$, and $G$ is a complex
Lie subgroup of $\text{GL}(N,{\mathbb C})$. On the other hand, $X_y$ is compact and
connected, and hence it does not admit any nonconstant holomorphic function. Therefore,
the function $\widehat{\sigma}_y$ in \eqref{e3} is a constant map. Consequently, the map $\sigma$
in \eqref{e2} descends to a holomorphic section of $E$. In other words, there is
a holomorphic section
$$
\sigma'\, :\, M\, \longrightarrow\, E
$$
such that $\varphi^*\sigma'\,=\, \sigma$. This section $\sigma'$ produces a holomorphic
trivialization of $E$.
\end{proof}

It should be mentioned that Proposition \ref{prop1} is not valid if the assumption --- that
$G$ is a complex Lie subgroup of $\text{GL}(N,{\mathbb C})$ for some $N\, \geq \, 1$ --- is
removed. To see this, let $\mathbb T$ be a compact complex torus, and let
$$
\phi\, :\, F_{\mathbb T}\, \longrightarrow\, M
$$
be a nontrivial holomorphic principal $\mathbb T$--bundle; see \cite{Ho} for nontrivial holomorphic
torus bundles and their properties. Now, the fibers of $\phi$ are connected, and the
principal $\mathbb T$--bundle $\phi^*F_{\mathbb T}$ has a tautological holomorphic trivialization.

\section{Pullback of holomorphic principal bundles}

\subsection{Pullback of holomorphic vector bundles}\label{se4.1}

An open dense subset $\mathcal W$ of a connected complex manifold $\mathcal Z$ will be called a
nonempty Zariski open subset if there are finitely many closed complex analytic subsets ${\mathcal
S}_k\, \subset\,\mathcal Z$, $1\,\leq\, k\, \leq\, N$, with $\dim {\mathcal S}_k\, <\,
\dim \mathcal Z$ for all $k$, such that
$$
{\mathcal Z}\setminus \mathcal W\,=\, \bigcup_{k=1}^N {\mathcal S}_k\, .
$$

As before $X$ and $M$ are compact connected complex manifolds, and $\varphi$, as in \eqref{e0}, is a 
surjective holomorphic map. We no longer assume that the fibers of $\varphi$ are connected.

\begin{theorem}\label{thm1}
Let $V$ be a holomorphic vector bundle on $M$ such that the holomorphic vector
bundle $\varphi^*V$ is trivial. Then $V$ admits a flat holomorphic connection
of finite monodromy.
\end{theorem}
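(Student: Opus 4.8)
The plan is to reduce the general case to the case of a surjective map with connected fibres, which has already been handled in Proposition \ref{prop1}, and then to promote "holomorphically trivial after pullback" to "finite monodromy flat connection" by a Stein factorization plus Galois closure argument.

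\medskip

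First I would take the Stein factorization of $\varphi\, :\, X\, \longrightarrow\, M$, writing $\varphi\,=\, \psi\circ\varphi'$ where $\varphi'\, :\, X\, \longrightarrow\, M'$ has connected fibres and $\psi\, :\, M'\, \longrightarrow\, M$ is a finite surjective holomorphic map between compact complex manifolds (after resolving singularities of $M'$, or restricting to the locus over which $\psi$ is a finite covering and using Hartogs-type extension of the connection; I expect the cleanest route is to pass to a nonempty Zariski open subset of $M$ over which $\psi$ is an honest finite topological covering, which is why the notion of Zariski open subset was introduced in Section \ref{se4.1}). Since $\varphi^*V\,=\,(\varphi')^*(\psi^*V)$ is holomorphically trivial and $\varphi'$ has connected fibres, Proposition \ref{prop1} applied to the structure group $\mathrm{GL}(N,\mathbb C)$, with $N\,=\,\mathrm{rank}(V)$, shows that $\psi^*V$ is holomorphically trivial on $M'$. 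Thus we are reduced to the situation where the map trivializing $V$ is a finite surjective map $\psi\, :\, M'\, \longrightarrow\, M$.

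\medskip

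Next I would replace $\psi$ by its Galois closure. Over the Zariski open subset $U\,\subset\, M$ where $\psi$ is a finite topological covering, let $\widehat{\psi}\, :\, \widehat{M}\, \longrightarrow\, M$ be a finite Galois covering (restricted over $U$) dominating $\psi\vert_{\psi^{-1}(U)}$, with Galois group $\Gamma$; normalizing and resolving we may take $\widehat{M}$ to be a compact connected complex manifold with a finite surjective holomorphic map to $M$ that factors through $M'$ generically. Then $\widehat{\psi}^*V$ is holomorphically trivial, so picking a trivialization identifies $\widehat{\psi}^*V$ with the trivial bundle $\widehat{M}\times \mathbb{C}^N$; pulling back the trivial (flat) connection on $\widehat{M}\times\mathbb C^N$ gives a flat holomorphic connection on $\widehat{\psi}^*V$ over $U$. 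The point is that this flat connection descends to $V$ over $U$: the descent datum is governed by how the deck transformations in $\Gamma$ act on the chosen trivialization, which produces a homomorphism $\Gamma\, \longrightarrow\, \mathrm{GL}(N,\mathbb C)$ — more precisely, a flat connection on $V\vert_U$ whose monodromy factors through the finite group $\pi_1(U)\twoheadrightarrow \Gamma$ composed with a representation of $\Gamma$, because the pullback connection is $\Gamma$-invariant. Hence $V\vert_U$ carries a flat holomorphic connection with finite monodromy.

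\medskip

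Finally I would extend this flat connection from the Zariski open subset $U$ to all of $M$. Since the connection has finite monodromy $\rho\, :\, \pi_1(U)\, \longrightarrow\, \mathrm{GL}(N,\mathbb C)$ with finite image, and since $\pi_1(U)\, \longrightarrow\, \pi_1(M)$ is surjective (removing complex-analytic subsets of real codimension $\geq 2$ does not kill loops, and in fact $\pi_1(M\setminus S)\twoheadrightarrow\pi_1(M)$), one checks that $\rho$ actually factors through $\pi_1(M)$ — equivalently, the finite étale covering of $U$ trivializing $V\vert_U$ is the restriction of a finite étale covering of $M$; this uses compactness of $M$ and the fact that a finite covering of $U$ with a holomorphically trivial pullback bundle has bounded-sheet monodromy, so its normal closure extends over the codimension-$\geq 2$ locus by Riemann extension / Grauert–Remmert for finite coverings. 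Once we have the finite étale Galois covering $g\, :\, \widetilde{M}\, \longrightarrow\, M$ with $g^*V$ holomorphically trivial, the trivial connection on $g^*V$ descends (the $\mathrm{Gal}(g)$-action on the trivialization gives a representation into $\mathrm{GL}(N,\mathbb C)$) to the desired flat holomorphic connection on $V$ with finite monodromy, exactly as in the construction recalled at the end of Section \ref{se2}. I expect the main obstacle to be this last extension step: controlling $\pi_1$ of the Zariski-open locus and arguing that the finite trivializing cover, a priori only defined over $U$, extends to a finite étale cover of the whole compact manifold $M$ — this is where compactness and the structure of analytic subsets of codimension $\geq 2$ are essential, and it is the heart of why the theorem is nontrivial.
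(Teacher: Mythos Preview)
Your approach is genuinely different from the paper's. The paper never attempts to descend a connection through a branched cover. Instead it uses the trace map to split $\mathcal{O}_M$ off $\varphi_*\mathcal{O}_X$, then the projection formula to exhibit $V$ as a direct summand of $(\varphi_*\mathcal{O}_X)^{\oplus r}$, and finally Atiyah's Krull--Schmidt theorem \cite{At0} to conclude that every $V^{\otimes k}$ is a direct sum of copies of the finitely many indecomposable \emph{locally free} summands $\mathcal{W}_1,\ldots,\mathcal{W}_m$ of $\varphi_*\mathcal{O}_X$. Thus $V$ is a \emph{finite bundle} in Nori's sense, and the conclusion is imported from \cite[Theorem~1.1]{Bi}. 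The Zariski open set with codimension~$\geq 2$ complement appearing in the paper is only the complement of the image of the \emph{singular locus} of the Stein factor $Z$ (needed to define the trace map and extend it by Hartogs); it is not the \'etale locus of $\gamma$.

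Your route has a real gap at the extension step. After Stein factorization the finite map $\psi$ is in general \emph{ramified}, and the locus $U\subset M$ over which $\psi$ (or its Galois closure) is an unramified covering has complement of \emph{complex codimension one} by purity of the branch locus --- not two. Consequently neither Hartogs nor Grauert--Remmert applies, and the surjection $\pi_1(U)\twoheadrightarrow\pi_1(M)$ by itself does not force your representation $\pi_1(U)\to\Gamma\to\mathrm{GL}(N,\mathbb{C})$ to factor through $\pi_1(M)$: the kernel is generated by loops around the branch divisors, and you must show these act trivially. This \emph{can} be done --- the global trivialization of $\widehat{\psi}^*V$ and a local trivialization of $V$ both extend across a ramification point, and comparing them forces the inertia to act by the identity --- but you do not supply this argument, and in dimension $>1$ the Galois closure $\widehat{M}$ is typically singular along the ramification, so even formulating the local computation requires care. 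The paper's finite-bundle argument bypasses all of these issues.
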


\begin{proof}
Since $\varphi$ is surjective, we have a natural inclusion of coherent analytic sheaves
\begin{equation}\label{iota}
\iota\, :\, {\mathcal O}_M\, \hookrightarrow\, \varphi_*{\mathcal O}_X\, .
\end{equation}
We will show that $\iota({\mathcal O}_M)$ is a direct summand of $\varphi_*{\mathcal O}_X$, meaning
there is a coherent analytic sheaf $\mathbb S$ on $M$ such that
\begin{equation}\label{e4}
\varphi_*{\mathcal O}_X\, =\, \iota({\mathcal O}_M)\oplus {\mathbb S}\, .
\end{equation}
To prove \eqref{e4}, let
$$
X\, \stackrel{\beta}{\longrightarrow}\, Z \, \stackrel{\gamma}{\longrightarrow}\, M
$$
be the Stein factorization of the map $\varphi$ (see \cite[p.~213]{GR} for Stein factorization).
We recall that this means that $\gamma$ is a finite map and $\beta_*{\mathcal O}_X\,=\, {\mathcal O}_Z$
\cite[p.~213]{GR}. We note that $Z$ is a normal space because $X$ is normal (see (2) of Stein factorization
theorem in \cite[p.~213]{GR}). Since all the fibers of $\beta$ are connected (see (2) of Stein
factorization theorem in \cite[p.~213]{GR}), we have
\begin{equation}\label{ei}
\gamma_*{\mathcal O}_Z\,=\, (\gamma\circ\beta)_*{\mathcal O}_X\,=\, \varphi_*{\mathcal O}_X\, .
\end{equation}
There is a nonempty Zariski open subset ${\mathcal U}\, \subset\, M$ such that
\begin{itemize}
\item $\gamma^{-1}({\mathcal U})\, =:\,
\widetilde{\mathcal U} $ is smooth, and

\item the complex codimension of the complement $M\setminus {\mathcal U}\, \subset\,
M$ is at least two.
\end{itemize}
In fact, ${\mathcal U}$ can be taken to be the open subset such that $M\setminus {\mathcal U}$
is the image, under the map $\gamma$, of the singular locus of $Z$.

Over ${\mathcal U}$, we have the trace map
\begin{equation}\label{tp}
\tau'\, :\, \gamma_*{\mathcal O}_{\widetilde{\mathcal U}}\, \longrightarrow\, {\mathcal O}_{\mathcal U}\, .
\end{equation}
We recall that $\tau'$ is defined as follows:
for any open subset ${\mathcal W}\, \subset\, {\mathcal U}$, and any
function $f\, \in\, H^0(\gamma^{-1}({\mathcal W}), \, {\mathcal O}_{\gamma^{-1}({\mathcal W})})$,
$$
\tau'(f)(m)\, =\, \sum_{z\in \gamma^{-1}(m)} f(z)
$$
for all $z\,\in\, {\mathcal W}$, where the summation is over the points of the inverse image
$\gamma^{-1}(m)$ with multiplicity. It is straightforward to check that the composition of homomorphisms
\begin{equation}\label{ec}
{\mathcal O}_{\mathcal U} \, \longrightarrow\, \gamma_*{\mathcal O}_{\widetilde{\mathcal U}}
\, \stackrel{\tau'}{\longrightarrow}\, {\mathcal O}_{\mathcal U}\, ,
\end{equation}
where ${\mathcal O}_{\mathcal U} \, \longrightarrow\, \gamma_*{\mathcal O}_{\widetilde{\mathcal U}}$
is the natural homomorphism as in \eqref{iota}, coincides with multiplication by the
degree of the map $\gamma$.

Now, since ${\mathcal O}_M$ is locally free, and the complex codimension of the complement
$M\setminus {\mathcal U}$ is at least two, using Hartogs' extension theorem, the
homomorphism $\tau'$ in \eqref{tp} extends uniquely to a homomorphism
\begin{equation}\label{tp2}
\tau\, :\, \gamma_*{\mathcal O}_Z\, \longrightarrow\, {\mathcal O}_M\, .
\end{equation}
As the composition of homomorphisms in \eqref{ec} is
multiplication by the degree of $\gamma$, and any endomorphism of ${\mathcal O}_M$ is multiplication
by a constant function, it follows immediately that the
composition of homomorphisms
\begin{equation}\label{ec2}
{\mathcal O}_M \, \longrightarrow\, \gamma_*{\mathcal O}_Z
\, \stackrel{\tau}{\longrightarrow}\, {\mathcal O}_M\, ,
\end{equation}
where ${\mathcal O}_M \, \longrightarrow\, \gamma_*{\mathcal O}_Z$
is the natural homomorphism as in \eqref{iota}, coincides with multiplication by the
degree of the map $\gamma$.

Let
$$
{\mathbb S}\, \subset\, \varphi_*{\mathcal O}_X
$$
be the subsheaf that corresponds to $\text{kernel}(\tau)\, \subset\, \gamma_*{\mathcal O}_Z$ (see \eqref{tp2})
by the isomorphism in \eqref{ei}. From the above observation, that the composition of homomorphisms
in \eqref{ec2} coincides with the multiplication by ${\rm degree}(\gamma)$, it follows immediately that
the isomorphism in \eqref{e4} holds.

Tensoring both sides of \eqref{e4} by $V$ we get that
\begin{equation}\label{e5}
V\otimes \varphi_*{\mathcal O}_X\, =\, V\oplus (V\otimes {\mathbb S})\, .
\end{equation}
On the other hand, by the projection formula,
\begin{equation}\label{e6}
V\otimes \varphi_*{\mathcal O}_X\,=\, \varphi_*\varphi^*V\, .
\end{equation}
Since $\varphi^*V\, =\, {\mathcal O}^{\oplus r}_X$, where $r$ is the
rank of $V$, combining \eqref{e5} and \eqref{e6} we conclude that
\begin{equation}\label{vd}
V\oplus (V\otimes {\mathbb S})\, =\, \varphi_*{\mathcal O}^{\oplus r}_X\,=\,
(\varphi_*{\mathcal O}_X)^{\oplus r}\, .
\end{equation}
In particular, the holomorphic vector bundle $V$ is a direct summand of
$(\varphi_*{\mathcal O}_X)^{\oplus r}$.

We now recall a result of Atiyah in \cite{At0}.
Any torsionfree coherent analytic sheaf ${\mathcal E}$ on $M$ can be expressed as
$$
{\mathcal E}\,=\, \bigoplus_{i=1}^\ell {\mathcal E}_i\, ,
$$
where each ${\mathcal E}_i$, $1\, \leq\, i\, \leq\, \ell$, is an indecomposable
torsionfree coherent analytic sheaf on $M$,
and ${\mathcal E}_i$, $1\, \leq\, i\, \leq\, \ell$, are unique
up to a permutation of $\{1,\, \cdots, \, \ell\}$ \cite[p.~315, Theorem 2]{At0}. We shall apply this
theorem of Atiyah to $\varphi_*{\mathcal O}_X$, and we shall separate, for our convenience,
the direct summands which are locally free and those which are not locally free.

So $\varphi_*{\mathcal O}_X$ is expressed as
\begin{equation}\label{e7}
\varphi_*{\mathcal O}_X\,=\, \left(\bigoplus_{i=1}^m \mathcal{W}_i\right)\bigoplus
\left(\bigoplus_{j=1}^n {\mathcal F}_j\right)\, ,
\end{equation}
where each $\mathcal{W}_i$, $1\, \leq\, i\, \leq\, m$, is an indecomposable holomorphic vector 
bundle on $M$ and each ${\mathcal F}_j$, $1\, \leq\, j\, \leq\, n$, is an indecomposable torsionfree 
coherent analytic sheaf on $M$ which is not locally free. In this case, the above theorem of Atiyah says that
$\{\mathcal{W}_1,\, \cdots ,\, 
\mathcal{W}_m\}$ are unique up to a permutation of $\{1,\, \cdots, \, m\}$, and $\{{\mathcal 
F}_1,\, \cdots ,\, {\mathcal F}_n\}$ are unique up to a permutation of $\{1,\, \cdots, \, n\}$.

We noted above that the holomorphic vector bundle $V$ is a direct summand of $(\varphi_*{\mathcal
O}_X)^{\oplus r}$. In view of this and \eqref{e7}, from the above theorem of Atiyah it can be
deduced that $V$ is a direct sum of copies of $\{\mathcal{W}_1,\, \cdots ,\, \mathcal{W}_m\}$, meaning
\begin{equation}\label{em}
V\,=\, \bigoplus_{i=1}^m \mathcal{W}^{\oplus d_i}_i\, ,
\end{equation}
where $d_i$ are nonnegative integers; by convention, $\mathcal{W}^{\oplus 0}_i\,=\,0$. To see
\eqref{em}, first note that from \eqref{vd} and \eqref{e7},
$$
V\oplus (V\otimes {\mathbb S})\, =\, \left(\bigoplus_{i=1}^m \mathcal{W}_i\right)^{\oplus r}\bigoplus
\left(\bigoplus_{j=1}^n {\mathcal F}_j\right)^{\oplus r}\, .
$$
Now expressing $V$ as a direct sum of indecomposable holomorphic vector bundles, and
$V\otimes {\mathbb S}$ as a direct sum of torsionfree indecomposable
coherent analytic sheaves, we conclude from the uniqueness part
of the above theorem of Atiyah that $V$ is a direct summand of
$\left(\bigoplus_{i=1}^m \mathcal{W}_i\right)^{\oplus r}\,=\, \bigoplus_{i=1}^m \mathcal{W}^{\oplus r}_i$.

Since $\varphi^*V$ is a holomorphically trivial vector bundle, it follows that 
$\varphi^*V^{\otimes k}\,=\, (\varphi^*V)^{\otimes k}$ is also a holomorphically trivial 
vector bundle for every integer $k\, \geq\, 1$. Consequently, substituting $V^{\otimes k}$ for 
$V$ in the above argument we conclude that $V^{\otimes k}$ is a direct sum of copies of 
$\{\mathcal{W}_1,\, \cdots ,\, \mathcal{W}_m\}$, for every $k\, \geq\, 1$.

We recall from \cite{No1} and \cite{No2} that a holomorphic vector bundle $\mathcal E$ on $M$ is called
a \textit{finite bundle} if there are finitely many holomorphic vector bundles
$B_1,\, \cdots,\, B_\ell$ on $M$ such that for every integer $k\, \geq\, 1$,
$$
{\mathcal E}^{\otimes k}\,=\, \bigoplus_{j=1}^\ell B^{\oplus c_{k,j}}_j\, ,
$$
where $c_{k,j}$ are nonnegative integers (\cite[p.~35, Definition]{No1}, \cite[p.~35, Lemma 3.1(d)]{No1}), 
\cite[p.~80, Definition]{No2}, \cite[(2.1)]{Bi}.

The vector bundle $V$ is finite, because $V^{\otimes k}$ is a direct sum of copies of
$\{\mathcal{W}_1,\, \cdots ,\, \mathcal{W}_m\}$, for every $k\, \geq\, 1$.
Now Theorem 1.1 of \cite{Bi} says that $V$ 
admits a flat holomorphic connection with finite monodromy.
\end{proof}

\subsection{Reductive structure group}

Let $G$ be a connected reductive complex affine algebraic group. As in \eqref{f1},
$$
p\, :\, E\,\longrightarrow\, M
$$
is a holomorphic principal $G$--bundle over a compact complex manifold $M$.
Take $(X,\, \varphi)$ as in Section \ref{se4.1}.

\begin{lemma}\label{lem1}
If the holomorphic principal $G$--bundle $\varphi^*E$ is holomorphically trivial,
then $E$ admits a flat holomorphic connection with finite monodromy.
\end{lemma}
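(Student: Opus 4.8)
The plan is to reduce the statement about principal $G$--bundles to the already-established Theorem \ref{thm1} about vector bundles, using the representation theory of the reductive group $G$. First I would choose a faithful finite-dimensional complex representation $\rho\, :\, G\, \hookrightarrow\, \mathrm{GL}(N,{\mathbb C})$, which exists because $G$ is affine algebraic, and form the associated holomorphic vector bundle $V\,:=\, E(\mathbb C^N)\,=\, E\times^G{\mathbb C}^N$ on $M$. Since $\varphi^*E$ is holomorphically trivial as a principal $G$--bundle, the associated bundle $\varphi^*V\,=\,(\varphi^*E)({\mathbb C}^N)$ is a holomorphically trivial vector bundle on $X$. Hence Theorem \ref{thm1} applies to $V$: the bundle $V$ admits a flat holomorphic connection with finite monodromy, equivalently (as recalled in the excerpt, via \cite{Bi}, \cite{No1}) $V$ is a finite vector bundle, and there is a finite \'etale Galois covering $f\,:\,\widetilde M\,\longrightarrow\, M$ trivializing $V$.

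The main issue is that knowing the \emph{associated} vector bundle $V$ has finite monodromy does not immediately give a flat structure with finite monodromy on $E$ itself as a principal $G$--bundle — one must produce a reduction of structure group of $E$ to a finite subgroup of $G$. For this I would pass to the covering $f\,:\,\widetilde M\,\longrightarrow\, M$ and argue that, after possibly enlarging it to a further finite \'etale Galois cover, $f^*E$ becomes holomorphically trivial as a principal $G$--bundle. The key point is that $E$ can itself be built from finitely many associated vector bundles: because $G$ is reductive, Tannakian-type considerations (or an explicit embedding $G\,\hookrightarrow\,\mathrm{GL}(N)$ together with the fact that $G$ is cut out inside $\mathrm{GL}(N)$ by finitely many tensorial equations) let one detect triviality of $E$ from triviality of finitely many bundles of the form $E(W_a)$, where each $W_a$ is a subquotient of $({\mathbb C}^N)^{\otimes k_a}\otimes(({\mathbb C}^N)^*)^{\otimes l_a}$. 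Each such $W_a$ is a direct summand of a tensor-power construction on the finite bundle $V\oplus V^*$, hence is itself finite, hence trivialized by some finite \'etale cover of $M$; taking the common refinement gives a single finite \'etale Galois covering $f'\,:\,M'\,\longrightarrow\, M$ over which all these associated bundles are trivial, and therefore $(f')^*E$ is holomorphically trivial.

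Once $(f')^*E$ is holomorphically trivial, I would descend. The trivialization of $(f')^*E$ together with the Galois action of $\Gamma\,:=\,\mathrm{Gal}(f')$ on $M'$ exhibits $E$ as the quotient of $M'\times G$ by a $\Gamma$--action; the obstruction to this action being ``by constant elements of $G$'' is that the transition automorphisms are holomorphic maps $M'\,\longrightarrow\, G$, and since $M'$ is compact connected and $G$ is a complex Lie subgroup of some $\mathrm{GL}(N,{\mathbb C})$ (so admits no nonconstant holomorphic functions on a compact connected base — the same rigidity used in Proposition \ref{prop1}), these maps are locally constant, hence constant. Thus the descent data define a homomorphism $\Gamma\,\longrightarrow\, G$, i.e.\ $E$ is associated to the $\Gamma$--cover $M'$ via a homomorphism with finite image, which is exactly the statement that $E$ carries a flat holomorphic connection with finite monodromy (the flat connection being the descent of the trivial connection on $M'\times G$). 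The step I expect to be the genuine obstacle is the middle one: cleanly arguing that triviality of finitely many associated vector bundles forces (after a finite cover) triviality of the principal bundle $E$ itself; this is where reductivity of $G$ is essential and where one must invoke either a Tannakian reconstruction argument or the description of $G$ inside $\mathrm{GL}(N,{\mathbb C})$ as the stabilizer of a finite collection of tensors.
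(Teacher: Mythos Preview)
Your outline is correct, but it takes a genuinely different route from the paper's proof.

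The paper also begins by choosing a faithful representation $\rho:G\hookrightarrow\mathrm{GL}(\mathbb V)$, forming the associated bundle $E_{\mathbb V}$, and invoking Theorem~\ref{thm1} to get a flat holomorphic connection $\omega$ on $E_{\mathbb V}$ with finite monodromy. But from there the paper does \emph{not} pass to a finite cover or invoke any Tannakian reconstruction. Instead it uses reductivity of $G$ in a purely infinitesimal way: the inclusion of $G$--modules $\mathfrak g\hookrightarrow\mathrm{End}(\mathbb V)$ admits a $G$--equivariant splitting $\theta:\mathrm{End}(\mathbb V)\to\mathfrak g$, and the paper simply sets $\omega':=\theta\circ\Phi^*\omega$ on $E$ (where $\Phi:E\hookrightarrow E_{\mathbb V}$ is the natural inclusion). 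This $\omega'$ is a holomorphic connection on $E$, and the paper reads off flatness and finite monodromy of $\omega'$ directly from those of $\omega$. Reductivity enters only through the existence of $\theta$; there is no cover, no descent, no stabilizer-of-tensors argument.

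Your approach trades this one-line projection of connection forms for a global argument on covers. It works, but the step you correctly flag as the obstacle can be done much more cheaply than via Tannakian reconstruction: once $f^*V$ is trivial, the $G$--reduction $f^*E\subset f^*E_{\mathbb V}\cong \widetilde M\times\mathrm{GL}(N,\mathbb C)$ is a holomorphic map from the compact connected $\widetilde M$ to $\mathrm{GL}(N,\mathbb C)/G$, and since $G$ is reductive this quotient is an affine variety, so the map is constant and $f^*E$ is trivial --- no need to trivialize auxiliary tensor bundles $E(W_a)$ one by one. Your final descent step (constancy of the transition maps $\widetilde M\to G$ because $G$ sits in some $\mathrm{GL}(N,\mathbb C)$ and $\widetilde M$ is compact connected) is exactly the rigidity used in Proposition~\ref{prop1} and is fine. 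Compared to the paper, your argument is longer but arguably makes the equivalence with ``trivialized by a finite \'etale cover'' more transparent; the paper's argument is shorter but relies on the connection-form calculus and the compatibility of $\theta$ with curvature and monodromy.
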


\begin{proof}
Let
$$
\rho\, :\, G\, \longrightarrow\, \text{GL}(\mathbb{V})
$$
be a faithful algebraic representation, where $\mathbb V$ is a finite dimensional
complex vector space. Let $E_{\mathbb V}\, :=\, E({\mathbb V})$ be the holomorphic
principal $\text{GL}(\mathbb{V})$--bundle obtained by extending the structure group of
$E$ using the homomorphism $\rho$.

Since the group $G$ is reductive, the homomorphism of $G$--modules
$$
\rho\, :\, {\mathfrak g}\, \longrightarrow\, 
\text{End}({\mathbb V})\,=\, \text{Lie}(\text{GL}(\mathbb{V}))
$$
splits \cite[p.~128, Theorem 9.19]{FH}. Fix such a splitting; let
\begin{equation}\label{th}
\theta\, :\, \text{End}({\mathbb V})\,\longrightarrow\,\mathfrak g
\end{equation}
be the projection of $G$--modules corresponding to the chosen splitting.

Assume that $\varphi^*E$ is holomorphically trivial. Then the holomorphic
principal $\text{GL}(\mathbb{V})$--bundle $\varphi^*E_{\mathbb V}$ is also
holomorphically trivial. Now Theorem
\ref{thm1} says that $E_{\mathbb V}$ admits a flat holomorphic connection $D$ with finite
monodromy. Let
\begin{equation}\label{om2}
\omega\,\in\, H^0(E_{\mathbb V},\, \Omega^1_{E_{\mathbb V}}\otimes_{\mathbb C}
\text{End}({\mathbb V}))
\end{equation}
be a section as in \eqref{om} giving a flat holomorphic connection on $E_{\mathbb V}$ with
finite monodromy.

Let
$$
\Phi\, :\, E\, \hookrightarrow\, E_{\mathbb V}
$$
be the natural inclusion map. Consider the $\mathfrak g$--valued holomorphic
$1$--form
$$
\theta\circ \Phi^*\omega\, \in\, H^0(E,\, \Omega^1_E\otimes_{\mathbb C}{\mathfrak g})\, ,
$$
where $\theta$ is the homomorphism in \eqref{th} and $\omega$ is the $1$--form in \eqref{om2}.
It is straightforward to check that $\theta\circ \Phi^*\omega$ is a holomorphic connection on $E$.

If ${\mathcal K}(\omega)\, \in\, H^0(M,\, \text{ad}(E_{\mathbb V})\otimes\Omega^2_M)$
is the curvature of the connection given by $\omega$, then the curvature of the
connection on $E$ given by $\theta\circ\Phi^*\omega$ is
$$
\theta\circ {\mathcal K}(\omega)\, \in\, H^0(M,\, \text{ad}(E)\otimes\Omega^2_M)\, .
$$
Therefore, the connection given by $\theta\circ\Phi^*\omega$ is flat, because
${\mathcal K}(\omega)\,=\, 0$. The
monodromy of the flat connection defined by $\theta\circ\Phi^*\omega$ is finite, because
the connection defined by $\omega$ has finite monodromy.
\end{proof}

\section{Holomorphic generalized Cartan geometry}

Let $G$ be a connected complex Lie group and $H\, \subset\, G$ a closed connected complex
Lie subgroup. Denote by $\mathfrak g$ the Lie algebra of $G$.

There is a standard notion of Cartan geometry \cite{Sh}. Roughly speaking a Cartan geometry 
with model $(G,\,H)$ is infinitesimally modeled on the homogeneous space $G/H$. The Cartan 
geometry is flat (i.e., has vanishing curvature) if and only if it is locally isomorphic (not 
just infinitesimally) to the homogeneous space $G/H$ in the sense of Ehresmann \cite{Eh} (see 
also \cite{Go}).

It is a very stringent condition for a compact complex manifold to admit a holomorphic Cartan 
geometry. A more flexible notion of {\it generalized holomorphic Cartan geometry} was 
introduced in \cite{BD} (see also \cite{AM}). Holomorphic generalized Cartan 
geometries are stable under pullback by holomorphic maps \cite{BD}.

A generalized holomorphic Cartan geometry of model $(G,\,H)$ on a complex manifold $M$ is given 
by a holomorphic principal $H$--bundle $E_H$ over $M$ endowed with a $\mathfrak{g}$-valued 
holomorphic one form on $E_H$ such that the following two hold:
\begin{enumerate}
\item $\omega$ is $H$--equivariant with $H$ acting on $\mathfrak g$ via adjoint representation;

\item the restriction of $\omega$ to each fiber of $E_H$ coincides with the Maurer--Cartan form
associated to the action of $H$ on $E_H$.
\end{enumerate}
A generalized Cartan geometry is called flat if its curvature ${\mathcal K}(\omega)\,=\, d \omega + 
\frac{1}{2} \lbrack \omega, \omega \rbrack_{\mathfrak g}$ vanishes identically.

Notice that the standard definition of a Cartan geometry requires that $\omega$ realizes a 
pointwise linear isomorphism between $TE_H$ and $\mathfrak g$ (which implies that the complex 
dimension of $M$ coincides with that of $G/H$).

Denote by $E_G$ the holomorphic principal $G$--bundle constructed from $E_H$ by extension of the 
structure group using the inclusion map $H \,\hookrightarrow\, G$. Denote by $ {\rm ad}(E_H)$ and 
${\rm ad}(E_G)$ the adjoint bundles of $E_H$ and $E_G$ respectively. Let ${\rm At}(E_H)$ be the 
Atiyah bundle for $E_H$ (see \eqref{ab}).

A generalized holomorphic Cartan geometry of model $(G,\,H)$ is equivalently defined (see 
\cite{BD}) by a homomorphism $\Psi \,:\, {\rm At}(E_H) \,\longrightarrow\, \mathfrak{g}$
such that the following diagram commutes:
\begin{equation}\label{e6a}
\begin{matrix}
0 & \longrightarrow & {\rm ad}(E_H) & \longrightarrow & {\rm At}(E_H)
& \longrightarrow & TM & \longrightarrow & 0\\
&& \Vert && ~\Big\downarrow \Psi && ~\Big\downarrow t \\
0 & \longrightarrow & {\rm ad}(E_H) & {\longrightarrow} & {\rm ad}(E_G)
& \longrightarrow & {\rm ad}(E_G)/{\rm ad}(E_H) & \longrightarrow & 0
\end{matrix}
\end{equation}
Notice that the top row of the diagram is the exact sequence in \eqref{f3} corresponding to the 
principal bundle $E_H$ and the bottom row is given by the canonical inclusion of $ {\rm 
ad}(E_H)$ in ${\rm ad}(E_G)$. The homomorphism $t$ in \eqref{e6a} is is uniquely defined by 
$\Psi$ and the commutativity of the diagram.

The homomorphism $\Psi$ in \eqref{e6a} defines a canonical connection $D_G$ on the principal $G$-bundle $E_G$ and the 
generalized Cartan geometry is flat (i.e., $\mathcal {K}(\omega)\,=\,0$) if and only if the connection $D_G$ is flat (see 
Proposition 3.4 and Section 3.3 in \cite{BD}). In this case the monodromy of $(E_G,\, D_G)$ is called the monodromy of 
the generalized Cartan geometry.

Now let $G$ be a connected reductive complex affine algebraic group and $H\, \subset\,
G$ a connected closed algebraic subgroup of it. In the context of generalized
Cartan geometries, Lemma \ref{lem1} has the following consequence.

\begin{proposition}\label{thm2}
Take $M$, $X$ and $\varphi$ as \eqref{e0}. Let $(E_H,\, \omega)$ be a holomorphic generalized 
Cartan geometry on $M$, with model $(G,\,H)$, such that the pulled back generalized Cartan 
geometry $(\varphi^*E_H, \,\varphi^* \omega)$ on $X$ is flat and has trivial monodromy 
homomorphism. Then $M$ admits a flat holomorphic generalized Cartan geometry $(E_H, \,\omega')$, with 
model $(G,H)$, whose monodromy homomorphism has finite image.
\end{proposition}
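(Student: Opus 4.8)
The plan is to reduce the statement about generalized Cartan geometries to Lemma~\ref{lem1} applied to the extended principal $G$--bundle $E_G$. First I would recall the dictionary from \cite{BD}, already summarized above: the generalized Cartan geometry $(E_H,\,\omega)$ is equivalent to the data of a homomorphism $\Psi\,:\,\text{At}(E_H)\,\longrightarrow\,\mathfrak g$ making the diagram \eqref{e6a} commute, and $\Psi$ canonically determines a holomorphic connection $D_G$ on $E_G$; moreover flatness of the Cartan geometry is equivalent to flatness of $D_G$, and the monodromy of the geometry is by definition the monodromy of $(E_G,\,D_G)$.

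Next I would observe that the hypothesis -- that $(\varphi^*E_H,\,\varphi^*\omega)$ is flat with trivial monodromy -- translates, via this dictionary and the compatibility of the construction with pullback (the pullback of a generalized Cartan geometry is a generalized Cartan geometry, with $\Psi$ and $D_G$ pulling back accordingly), into the statement that the flat connection $\varphi^*D_G$ on $\varphi^*E_G$ has trivial monodromy. Since $X$ is compact and connected, a flat holomorphic principal $G$--bundle with trivial monodromy is holomorphically trivial (this was explained at the end of Section~\ref{se2}: the kernel of the monodromy being all of $\pi_1$, the associated \'etale covering is trivial, so the bundle itself is trivial). Hence $\varphi^*E_G$ is holomorphically trivial.

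Now I would apply Lemma~\ref{lem1} to the holomorphic principal $G$--bundle $E_G$ (recall $G$ is connected reductive complex affine algebraic, so the hypothesis of the lemma is met): it yields a flat holomorphic connection $D_G'$ on $E_G$ whose monodromy homomorphism has finite image. The remaining task is to run the dictionary backwards: produce from $D_G'$ a generalized Cartan geometry $(E_H,\,\omega')$ on the \emph{same} principal $H$--bundle $E_H$, flat, with finite monodromy. For this I would invoke the equivalence of \cite{BD} in the other direction. The point is that a connection on $E_G$ together with the $H$--reduction $E_H\,\subset\,E_G$ gives a splitting-type homomorphism; concretely, the connection $D_G'$ is a $G$--equivariant $\mathfrak g$--valued one-form $\omega_G'$ on $E_G$, and its restriction $\omega'\,:=\,(\Phi\circ j)^*\omega_G'$ to $E_H$ (where $E_H\,\hookrightarrow\,E_G$) is $H$--equivariant and restricts to the Maurer--Cartan form on the fibers of $E_H$ only if the restriction to the $\mathfrak{h}$--directions is correct; in general one needs the reduction $E_H$ to be compatible with $D_G'$, equivalently one needs $D_G'$ to restrict to a connection on $E_H$. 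This is exactly where the argument of Lemma~\ref{lem1} must be used more carefully rather than as a black box: the connection produced there on $E_{\mathbb V}$ comes, via the splitting $\theta$ of the reductive Lie algebra, already adapted to $E$, and the same mechanism adapts $\omega_G'$ to a $\mathfrak g$--valued one-form on $E_H$ satisfying the two axioms, with curvature the image under an algebraic projection of ${\mathcal K}(\omega_G')\,=\,0$, hence flat, and with monodromy controlled by (a subquotient of) that of $D_G'$, hence finite.

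The main obstacle I anticipate is precisely this last step: ensuring that the flat, finite-monodromy connection on $E_G$ supplied by Lemma~\ref{lem1} can be converted into a \emph{generalized Cartan geometry datum on $E_H$} -- i.e.\ that one lands back in the image of the construction $(E_H,\,\omega')\,\mapsto\,(E_G,\,D_G)$ -- rather than merely obtaining an abstract flat $G$--bundle. I expect the cleanest route is to apply Lemma~\ref{lem1} not to $E_G$ directly but to $E_H$ in a way that tracks the reduction: extend the structure group of $E_H$ along a faithful representation of $G$ (restricted to $H$), trivialize after pullback, obtain a finite-monodromy flat connection on $E_H({\mathbb V})$ from Theorem~\ref{thm1}, and then use the $G$--module (not merely $H$--module) splitting $\theta\,:\,\text{End}({\mathbb V})\,\longrightarrow\,\mathfrak g$ exactly as in the proof of Lemma~\ref{lem1} to produce $\omega'\,=\,\theta\circ\Phi^*\omega$ on $E_H$; checking that this one-form satisfies the two Cartan-geometry axioms and that its curvature is $\theta\circ{\mathcal K}(\omega)\,=\,0$ is then routine and parallels the curvature computation already done for Lemma~\ref{lem1}. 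Verifying that the monodromy of the resulting $(E_G,\,D_G)$ remains finite is immediate since it is obtained by extension of structure group from the finite-monodromy flat connection just constructed.
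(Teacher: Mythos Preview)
Your overall strategy coincides with the paper's: show $\varphi^*E_G$ is holomorphically trivial (via the dictionary with $D_G$ and the hypothesis of trivial monodromy), apply Lemma~\ref{lem1} to obtain a flat connection $D'_G$ on $E_G$ with finite monodromy, and then convert $(E_H\subset E_G,\,D'_G)$ back into a generalized Cartan geometry $(E_H,\,\omega')$.

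However, the obstacle you anticipate in the third paragraph does not exist, and this is worth correcting because it leads you to an unnecessarily elaborate detour. If $\omega'_G$ is the connection form of $D'_G$ on $E_G$ and $j:E_H\hookrightarrow E_G$ is the inclusion, then $\omega':=j^*\omega'_G$ \emph{automatically} satisfies both axioms of a generalized Cartan geometry: $H$--equivariance is inherited from $G$--equivariance of $\omega'_G$, and on the fiber direction one has, for $\xi\in\mathfrak h\subset\mathfrak g$, that the fundamental vector field of $\xi$ on $E_H$ maps under $j_*$ to the fundamental vector field of $\xi$ on $E_G$, whence $\omega'(\xi^\#)=\omega'_G(\xi^\#)=\xi$. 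No compatibility of $D'_G$ with the reduction $E_H$ is required; that would only be needed if one wanted a connection on $E_H$ (an $\mathfrak h$--valued form), but a generalized Cartan geometry is a $\mathfrak g$--valued form. The paper disposes of this step in one line by citing \cite{BD}, Theorem~3.7(3), which records exactly this: any holomorphic connection on $E_G$ together with the reduction $E_H\subset E_G$ yields a holomorphic generalized Cartan geometry on $E_H$. Flatness and the monodromy statement then follow immediately, since the associated $(E_G,\,D_G)$ for $(E_H,\,\omega')$ is precisely $(E_G,\,D'_G)$.

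Your alternative route in the last paragraph (working with $E_H(\mathbb V)$ and the $G$--module splitting $\theta$) would also succeed, but it reproves by hand what \cite{BD} already supplies and is not needed.
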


\begin{proof}
We apply Lemma \ref{lem1} to the principal $G$--bundle $E_G$. Since the 
generalized Cartan geometry $(\varphi^*E_H, \,\varphi^* \omega)$ is flat with trivial monodromy, 
it follows that $\varphi^*E_G$ is flat with trivial monodromy (note that $\varphi^*E_G$ coincides 
with the principal $G$-bundle associated to $\varphi^*E_H$ by extension of the structure 
group). Consequently, $\varphi^*E_G$ is holomorphically trivial. Now Lemma \ref{lem1} implies 
that $E_G$ admits a holomorphic flat connection $D'_G$ with finite monodromy. Together with the 
reduction of the structure group $E_H\, \subset\, E_G$ to the subgroup $H$ the connection $D'_G$ defines a 
holomorphic generalized geometry $(E_H,\, \omega')$ on $M$ \cite{BD} (Theorem 3.7, point (3)). 
Since $D'_G$ is flat, the curvature ${\mathcal K}(\omega')$ of $\omega'$ vanishes identically. The 
monodromy of $(E_H, \,\omega')$ coincides with that of $(E_G, D'_G)$, so it is finite. 
\end{proof}

\section*{Acknowledgements}

This work has been supported by the French government through the UCAJEDI Investments in the 
Future project managed by the National Research Agency (ANR) with the reference number 
ANR2152IDEX201. The first-named author is partially supported by a J. C. Bose Fellowship, and 
school of mathematics, TIFR, is supported by 12-R$\&$D-TFR-5.01-0500. The second-named author 
wishes to thank TIFR Mumbai for hospitality, while the first-named author thanks
Universit\'e C\^ote d'Azur for hospitality.


\end{document}